\theoremstyle{plain}
\newtheorem{thm}{Theorem}[section]
\newtheorem{pro}[thm]{Proposition}
\newtheorem{lem}[thm]{Lemma}
\theoremstyle{definition}
\newtheorem{defn}[thm]{Definition}
\newtheorem{exa}[thm]{Example}
\newtheorem{rem}[thm]{Remark}
\newtheorem{cor}[thm]{Corollary}
\numberwithin{equation}{section}
\newcommand{\R}{\mathbb{R}}
\newcommand{\N}{{{\Bbb N}}}
\newcommand{\T}{{{\Bbb T}}}
\def\qed{\hbox to 0pt{}\hfill$\rlap{$\sqcap$}\sqcup$\medbreak}
\begin{document}

\title[Discontinuous second--order problems]{A Schauder-type theorem for discontinuous operators with applications to second-order BVPs}
\date{}

\subjclass[2010]{Primary 47H10, secondary 34A36, 34B15}%
\keywords{Schauder's theorem, fixed point, discontinuous operator, separated boundary conditions}%

\author[R. Figueroa]{Rub\'en Figueroa}%
\address{Rub\'en Figueroa, Departamento de An\'alise Ma\-te\-m\'a\-ti\-ca, Facultade de Matem\'aticas,
Universidade de Santiago de Com\-pos\-te\-la, 15782 Santiago de Compostela, Spain}%
\email{ruben.figueroa@usc.es}%

\author[G. Infante]{Gennaro Infante}
\address{Gennaro Infante, Dipartimento di Matematica e Informatica, Universit\`{a} della
Calabria, 87036 Arcavacata di Rende, Cosenza, Italy}%
\email{gennaro.infante@unical.it}%

\begin{abstract}
We prove a new fixed point theorem of Schauder-type which applies to discontinuous operators in non-compact domains. In order to do so, we present a modification of a recent Schauder-type theorem due to Pouso. We apply our result to second-order boundary value problems with discontinuous nonlinearities. We include an example to illustrate our theory.
\end{abstract}

\maketitle

\section{Introduction}
In the recent and interesting paper~\cite{pouso}, Pouso proved a novel version of Schauder's theorem for discontinuous operators in compact sets. Pouso used this tool to prove new results on the existence of solutions of a widely studied second order ordinary differential equation (ODE) subject to Dirichlet boundary conditions (BCs), namely
$$u''=f(t,u),\quad u(0)=u(1)=0,$$
where $f$ is a $L^1$-bounded nonlinearity. The approach in~\cite{pouso} relies on a careful use of ideas of set-valued analysis and viability theory. 

In this manuscript, we further develop the ideas of Pouso. Firstly we prove that a Schauder-type theorem for discontinuous operators can be formulated for arbitrary nonempty, closed and convex (not necessarily bounded) subsets of a Banach space. Secondly we apply our new result to prove the existence of solutions of a large class 
of discontinuous second order ODE subject to separated BCs, complementing the results of ~\cite{pouso} and improving them also in the special case of Dirichlet BCs.

\section{Schauder's fixed point theorem for discontinuous operators}

For the sake of completeness, we begin this Section by recalling the classical Schauder's fixed point theorem.

\begin{thm}\label{schauder}\cite[Theorem 2.A]{zeidler} Let $K$ be a nonempty, closed, bounded, convex subset of a Banach space $X$ and suppose that $T:K \longrightarrow K$ is a compact operator (that is, $T$ is continuous and maps bounded sets into precompact ones). Then $T$ has a fixed point.
\end{thm}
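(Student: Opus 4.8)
The plan is to reduce this infinite-dimensional statement to Brouwer's fixed-point theorem by means of finite-dimensional approximations of $T$, following the classical Schauder projection scheme. First I would exploit the compactness hypothesis: since $K$ is bounded and $T$ maps bounded sets into precompact ones, the set $\overline{T(K)}$ is compact. Hence for every $\varepsilon>0$ it admits a finite $\varepsilon$-net $\{x_1,\dots,x_n\}\subseteq T(K)\subseteq K$, so that every point of $\overline{T(K)}$ lies within distance $\varepsilon$ of some $x_i$.

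Next I would build the Schauder projection onto this finite set. Setting $\lambda_i(y)=\max\{0,\varepsilon-\|y-x_i\|\}$, the denominator $\sum_i\lambda_i(y)$ is strictly positive on $\overline{T(K)}$, and the map
\[
P_\varepsilon(y)=\frac{\sum_{i=1}^n\lambda_i(y)\,x_i}{\sum_{i=1}^n\lambda_i(y)}
\]
is continuous and satisfies $\|P_\varepsilon(y)-y\|<\varepsilon$ for all $y\in\overline{T(K)}$, because $P_\varepsilon(y)$ is a convex combination of net points each within $\varepsilon$ of $y$. Writing $C_\varepsilon$ for the convex hull of $\{x_1,\dots,x_n\}$, convexity of $K$ gives $C_\varepsilon\subseteq K$, and $P_\varepsilon\circ T$ maps the compact convex set $C_\varepsilon$, which lies in a finite-dimensional subspace, into itself.

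Then I would apply Brouwer's theorem to obtain, for each $\varepsilon>0$, a point $u_\varepsilon\in C_\varepsilon\subseteq K$ with $P_\varepsilon(T(u_\varepsilon))=u_\varepsilon$, whence $\|u_\varepsilon-T(u_\varepsilon)\|<\varepsilon$. Finally I would pass to the limit along a sequence $\varepsilon_m\to0$: by compactness of $\overline{T(K)}$ the sequence $T(u_{\varepsilon_m})$ has a subsequence converging to some $u^*\in K$, and the estimate $\|u_{\varepsilon_m}-T(u_{\varepsilon_m})\|<\varepsilon_m$ forces the corresponding $u_{\varepsilon_m}$ to converge to the same limit $u^*$; continuity of $T$ then yields $T(u^*)=u^*$.

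The main obstacle I anticipate is the construction and verification of the Schauder projection, and in particular checking the continuity of $P_\varepsilon$ together with the uniform bound $\|P_\varepsilon(y)-y\|<\varepsilon$ and the self-mapping property $P_\varepsilon\circ T\colon C_\varepsilon\to C_\varepsilon$, since everything downstream, namely the legitimacy of invoking Brouwer and the concluding limiting argument, depends on these three properties holding simultaneously.
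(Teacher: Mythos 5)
The paper states this result as a known classical theorem, citing Zeidler without reproducing a proof, so there is no in-paper argument to compare against. Your proposal is the standard and correct proof (finite $\varepsilon$-net of the compact set $\overline{T(K)}$, Schauder projection onto its convex hull, Brouwer's theorem in finite dimensions, then a limiting argument using compactness and continuity), which is essentially the argument in the cited source.
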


A well-known consequence of Theorem~\ref{schauder} is the following.

\begin{cor}\cite[Corollary 2.13]{zeidler}\label{corsh} Let $K$ be a nonempty, compact and convex subset of a Banach space $X$ and $T:K \longrightarrow K$ a continuous operator. Then $T$ has a fixed point.
\end{cor}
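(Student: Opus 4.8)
The plan is to deduce this directly from Schauder's theorem (Theorem~\ref{schauder}) by verifying that its hypotheses are satisfied under the weaker-looking assumptions of the corollary. First I would observe that since $K$ is compact it is, in particular, closed and bounded; hence $K$ is a nonempty, closed, bounded, convex subset of $X$, exactly as required in Theorem~\ref{schauder}. Thus the only point left to check is that the continuous operator $T$ is \emph{compact} in the sense of Theorem~\ref{schauder}, that is, that it sends bounded subsets of $K$ into precompact sets.

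For this, I would argue as follows. Let $B \subseteq K$ be any bounded subset. Since $T(K) \subseteq K$, we have $T(B) \subseteq K$, so the closure $\overline{T(B)}$ is contained in $\overline{K}=K$, the last equality holding because $K$, being compact in the Hausdorff space $X$, is closed. Therefore $\overline{T(B)}$ is a closed subset of the compact set $K$, and a closed subset of a compact set is itself compact; consequently $T(B)$ is precompact. Combined with the continuity of $T$ that is assumed in the statement, this shows that $T$ is a compact operator.

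With these observations in place, Theorem~\ref{schauder} applies directly and produces a fixed point of $T$ in $K$, which is the desired conclusion. I do not expect any genuine obstacle: the corollary is essentially the specialization of Schauder's theorem to a compact domain, and the only minor point requiring care is the verification that every subset of a compact set is automatically precompact, which rests on the standard fact that closed subsets of compact sets are compact.
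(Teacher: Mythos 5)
Your argument is correct and follows exactly the route the paper intends: the paper states the corollary as ``a well-known consequence of Theorem~\ref{schauder}'', and your verification that a compact $K$ is closed and bounded and that every subset of $K$ is automatically precompact is precisely the standard reduction. No issues.
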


The main result in \cite{pouso} is an improvement of Corollary~\ref{corsh}, where the continuity of the operator $T$ is replaced by a weaker assumption. We briefly describe the main idea: given a compact subset $K$ of a Banach space $X$ and an operator $T:K \longrightarrow K$ that can be discontinuous, it is possible to
construct a multivalued mapping $\T$ by `convexifying' $T$ as follows:
\begin{equation}\label{T}
\T u:= \bigcap_{\varepsilon >0} \, \overline{\rm co} \, (T(B_{\varepsilon}(u) \cap K)) \ \mbox{ for every $u \in K$},
\end{equation}
where $B_{\varepsilon}(u)$ denotes the closed ball centered in $u$ and radius $\varepsilon$, and $\overline{\rm co}$ denotes the closed convex hull. The operator $\T$ in \eqref{T} is an upper semi-continuous mapping with convex and compact values (see \cite{aucel},\cite{deimling}), and therefore Kakutani's fixed point theorem guarantees that $\T$ has a fixed point in $K$. If we impose and extra assumption that, roughly speaking, states that a fixed point of $\T$ must be a fixed point of $T$, then we obtain the desired result.

The fo\-llo\-wing characterisation sheds light on the definition of the multivalued operator~$\T$. It is formulated for compact subsets, but it works for arbitrary nonempty subsets of a Banach space (see also \cite[Proposition 3.2]{pouso}).

\begin{pro} \label{char} Let $K$ be a compact subset of a Banach space $X$ and $T:K \longrightarrow K$. Then the following statements are equivalent:

\begin{enumerate}

\item $y \in \T u$, where $\T$ is as in (\ref{T});

\item for every $\varepsilon >0$ and every $\rho >0$ there exists a finite family of vectors $u_i \in B_{\varepsilon}(u) \cap K$ and coefficients $\lambda_i \in [0,1]$ ($i=1, \ldots,m$) such that $\sum \lambda_i =1$ and
$$
\left\| y - \sum_{i=1}^m \lambda_i Tu_i \right\| < \rho.$$

\end{enumerate}

\end{pro}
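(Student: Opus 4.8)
The plan is to prove the equivalence by unwinding the definition of $\T u$ in~\eqref{T} and recognizing that membership in $\overline{\rm co}\,(T(B_\varepsilon(u)\cap K))$ is precisely captured by the finite convex combinations described in~(2). First I would establish the direction $(1)\Rightarrow(2)$. Suppose $y\in\T u$, so that $y\in\overline{\rm co}\,(T(B_\varepsilon(u)\cap K))$ for \emph{every} $\varepsilon>0$. Fix $\varepsilon>0$ and $\rho>0$. Since $y$ lies in the closed convex hull, it is a limit of points of the (ordinary) convex hull $\mathrm{co}\,(T(B_\varepsilon(u)\cap K))$; hence there is a point $z$ in that convex hull with $\|y-z\|<\rho$. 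By the definition of convex hull, $z=\sum_{i=1}^m\lambda_i Tu_i$ for some $u_i\in B_\varepsilon(u)\cap K$ and $\lambda_i\in[0,1]$ with $\sum\lambda_i=1$, which is exactly the statement in~(2).

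For the converse $(2)\Rightarrow(1)$, I would fix $\varepsilon>0$ and show $y\in\overline{\rm co}\,(T(B_\varepsilon(u)\cap K))$; since $\varepsilon$ is arbitrary, the intersection over all $\varepsilon$ then contains $y$, giving $y\in\T u$. To see this, note that for the fixed $\varepsilon$ and for every $\rho>0$, hypothesis~(2) produces a convex combination $\sum\lambda_i Tu_i$ with all $u_i\in B_\varepsilon(u)\cap K$ lying within $\rho$ of $y$. Each such combination belongs to $\mathrm{co}\,(T(B_\varepsilon(u)\cap K))\subseteq\overline{\rm co}\,(T(B_\varepsilon(u)\cap K))$, and letting $\rho\to0$ exhibits $y$ as a limit of points in this closed set, so $y$ belongs to it.

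One subtlety worth flagging is the role of the radius $\varepsilon$ in the two directions. In $(1)\Rightarrow(2)$ the membership $y\in\T u$ gives information for every $\varepsilon$ simultaneously, so the single fixed $\varepsilon$ appearing in~(2) is immediately available. In $(2)\Rightarrow(1)$, the quantifier structure of~(2) already ranges over all $\varepsilon$, and one simply uses a fixed value at a time; there is no need to let $\varepsilon$ vary within a single estimate, since the balls $B_\varepsilon(u)\cap K$ are nested and closedness of $\overline{\rm co}$ handles each radius independently.

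The main obstacle, such as it is, will be purely bookkeeping: making sure the statement `$y$ belongs to the closed convex hull if and only if it is approximable to within every $\rho$ by finite convex combinations' is invoked cleanly, since $\overline{\rm co}(S)=\overline{\mathrm{co}(S)}$ and the closure is exactly the set of limits of such combinations. I do not expect to need compactness of $K$ for this characterization itself---indeed the paper remarks that the result holds for arbitrary nonempty subsets---so I would present the argument without using compactness, treating it merely as the ambient hypothesis under which $\T$ is later applied.
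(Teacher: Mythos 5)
Your proof is correct and is essentially the intended argument: the paper itself gives no proof, deferring to \cite[Proposition 3.2]{pouso}, where the statement is established exactly by this unwinding of the definition of $\T$ via the identity $\overline{\rm co}\,(S)=\overline{{\rm co}\,(S)}$ and the description of ${\rm co}\,(S)$ as finite convex combinations. Your observation that compactness of $K$ plays no role is also consistent with the paper's remark preceding the statement.
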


The variant of Schauder's theorem in compact subsets given by Pouso is the following.
\begin{thm}\cite[Theorem 3.1]{pouso}\label{pousoth} Let $K$ be a nonempty, compact and convex subset of a normed space $X$ and 
 $T: K \longrightarrow K$. Furthermore assume that
 $$
\{u\} \bigcap \T u \subset \{Tu\} \ \mbox{ for all $u \in K$},$$
where $\T$ is as in (\ref{T}).
Then $T$ has a fixed point.
\end{thm}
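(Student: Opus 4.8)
The plan is to leverage the multivalued operator $\T$ defined in~\eqref{T} together with the known fixed point theory for upper semi-continuous maps. First I would recall the structural facts about $\T$ already mentioned in the excerpt: for a compact, convex set $K$ and any $T:K\longrightarrow K$, the map $\T$ is upper semi-continuous with nonempty, closed, convex values, and $\T u\subset K$ for every $u\in K$ (since $T(B_\varepsilon(u)\cap K)\subset K$ and $K$ is closed and convex, each $\overline{\mathrm{co}}\,(T(B_\varepsilon(u)\cap K))\subset K$, hence so is the intersection). Because $K$ is compact, these values are in fact compact.

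With those properties in hand, the central step is to invoke Kakutani's fixed point theorem (in its Bohnenblust--Karlin form for normed spaces) applied to $\T:K\rightrightarrows K$. This yields a point $u_0\in K$ with $u_0\in\T u_0$. At this stage $u_0$ is merely a fixed point of the convexified multivalued map, not yet of $T$ itself, so the remaining work is to upgrade it to a genuine fixed point of $T$.

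The upgrade is exactly where the hypothesis enters, and this is the step I expect to carry the real content. Since $u_0\in\T u_0$, we certainly have $u_0\in\{u_0\}\cap\T u_0$, so the standing assumption
$$
\{u_0\}\cap\T u_0\subset\{Tu_0\}
$$
forces $u_0\in\{Tu_0\}$, i.e. $u_0=Tu_0$. Thus $u_0$ is a fixed point of $T$, which is the conclusion. The argument is short once the machinery is assembled; the genuine obstacle is not this final deduction but the justification that $\T$ has the properties Kakutani requires.

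Consequently, the main obstacle I anticipate is verifying the upper semi-continuity and the compact-convex-valuedness of $\T$ rigorously, rather than citing it. One would need to check that for $u\in K$ the decreasing (in $\varepsilon$) family of nonempty compact convex sets $\overline{\mathrm{co}}\,(T(B_\varepsilon(u)\cap K))$ has nonempty intersection $\T u$ (finite intersection property in a compact space), that this intersection is convex and compact, and that the graph of $\T$ is closed, which in the compact setting gives upper semi-continuity. These are precisely the facts attributed to~\cite{aucel} and~\cite{deimling} in the discussion preceding the theorem, so in a streamlined proof I would cite them and spend the written argument on the clean two-line passage from the Kakutani fixed point to the $T$-fixed point via the hypothesis.
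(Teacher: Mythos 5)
Your proposal is correct and follows essentially the same route the paper sketches for this result: convexify $T$ into the upper semi-continuous, compact-convex-valued map $\T$, invoke Kakutani (Bohnenblust--Karlin) on the compact convex set $K$ to obtain $u_0\in\T u_0$, and then use the hypothesis $\{u_0\}\cap\T u_0\subset\{Tu_0\}$ to conclude $u_0=Tu_0$. Your identification of the verification of the structural properties of $\T$ (nonemptiness via the finite intersection property, compactness, convexity, closed graph) as the substantive burden matches where the paper defers to the cited references.
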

Theorem~\ref{pousoth} is very interesting and powerful; however, when one wants to look for solutions for a certain boundary value problem (BVP), the fact of working in a compact domain could be quite restrictive. In order to overcome this difficulty, firstly we recall that Theorem~\ref{schauder} admits the following extension to unbounded domains.
\begin{thm}\label{schauder2}\cite[Theorem 4.4.10]{lloyd} Let $M$ be nonempty, closed and convex subset of a Banach space $X$ and $T:M \longrightarrow M$ a continuous operator. If $T(M)$ is precompact then $T$ has a fixed point.
\end{thm}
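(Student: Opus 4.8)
The plan is to deduce this statement from the version in compact sets, namely Corollary~\ref{corsh}, by shrinking the (possibly unbounded) domain $M$ down to a suitable compact, convex set on which $T$ still acts. Concretely, I would first form the closed convex hull of the image, $C := \overline{\rm co}\,(T(M))$. The crucial observation, and the one place where the completeness of $X$ is genuinely needed, is that in a Banach space the closed convex hull of a precompact set is again compact (Mazur's theorem). Since $T(M)$ is precompact by hypothesis, $C$ is therefore a nonempty, compact and convex subset of $X$.

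Next I would verify that $T$ maps $C$ into itself. Because $T(M)\subset M$ and $M$ is closed and convex, the closed convex hull of $T(M)$ is contained in $M$, whence $C\subset M$. Consequently, for every $u\in C\subset M$ we have $Tu\in T(M)\subset \overline{\rm co}\,(T(M))=C$, so that $T(C)\subset C$. As $T$ is continuous on $M$, its restriction $T|_C$ is a continuous self-map of the nonempty, compact, convex set $C$.

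Finally I would apply Corollary~\ref{corsh} to $T|_C:C\longrightarrow C$, which produces a fixed point $u\in C\subset M$; this is precisely the sought fixed point of $T$ in $M$. The only substantial step is the compactness of $C$: without completeness the closed convex hull of a compact set need not be compact, so Mazur's theorem is exactly what licenses the passage from the unbounded domain $M$ to the compact setting demanded by Schauder's theorem. The remaining checks, that $C\subset M$ and $T(C)\subset C$, are routine consequences of the closedness and convexity of $M$ together with the inclusion $T(M)\subset M$.
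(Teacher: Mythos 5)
Your proof is correct. The paper does not actually prove this statement---it is quoted verbatim from Lloyd's book---but your argument (restricting $T$ to the nonempty, compact, convex set $C=\overline{\rm co}\,(T(M))\subset M$, whose compactness is guaranteed by Mazur's theorem in a Banach space, and then invoking Corollary~\ref{corsh}) is the standard derivation and is precisely the same compactness device the authors themselves employ later in the proof of Theorem~\ref{sch}.
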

Secondly, we recall the following result due to Bohnenlust-Karlin.
\begin{thm}\cite[Corollary 9.8]{zeidler}\label{BK}
Let $M$ be a nonempty, closed and convex subset of a Banach space $X$ and suppose that
\begin{enumerate}
\item[$(i)$] $T:M\to 2^M$ is upper semi-continuous;
\item[$(ii)$] $T(M)$ is relatively compact in $X$;
\item[$(iii)$] $T(u)$ is nonempty, closed and convex for all $u \in M$.
\end{enumerate}
Then $T$ has a fixed point.
\end{thm}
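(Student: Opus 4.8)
The plan is to deduce Theorem~\ref{BK} from a multivalued fixed point theorem on a \emph{compact} convex domain, in exactly the same spirit in which Theorem~\ref{schauder2} is obtained from Theorem~\ref{schauder} in the single-valued case. The central idea is to replace the (possibly unbounded) set $M$ by a compact convex subset that is mapped into itself by $T$, so that the classical compact-domain theory applies.

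First I would exploit hypothesis $(ii)$. Since $T(M)$ is relatively compact, its closure $\overline{T(M)}$ is a compact subset of $X$. By Mazur's theorem, the closed convex hull
$$
C := \overline{\operatorname{co}}\,\bigl(\overline{T(M)}\bigr)
$$
of a compact set in a Banach space is again compact; it is moreover nonempty and convex. Because $M$ is closed and convex with $T(M)\subseteq M$, we have $\overline{\operatorname{co}}(T(M))\subseteq M$ and $\overline{T(M)}\subseteq M$, whence $C\subseteq M$. Thus $C$ is a nonempty, compact, convex subset of $M$.

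Next I would check that $T$ restricts to a multivalued self-map of $C$. For any $u\in C\subseteq M$, hypothesis $(iii)$ gives that $T(u)$ is nonempty, closed and convex, while $T(u)\subseteq T(M)\subseteq\overline{T(M)}\subseteq C$; hence $T|_{C}\colon C\to 2^{C}$ is well defined with nonempty, closed, convex values. Upper semicontinuity on $C$ is inherited from $(i)$, since $C$ carries the subspace topology and the closed-graph characterisation of upper semicontinuity passes to the closed subset $C$ with $T(C)\subseteq C$. I would then apply the multivalued fixed point theorem on a nonempty compact convex set, namely Kakutani's theorem as already invoked in the discussion preceding~\eqref{T}, to $T|_{C}$. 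This produces a point $u^{*}\in C$ with $u^{*}\in T(u^{*})$, and since $C\subseteq M$ this $u^{*}$ is the required fixed point of $T$ in $M$.

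The step I expect to be the crux is the compactness of $C$: in an infinite-dimensional normed space the convex hull of a compact set need not be closed, and its closure need not be compact. It is precisely the completeness of $X$, through Mazur's theorem, that guarantees $C$ is compact and thereby converts the mere relative compactness of the image in $(ii)$ into a genuinely compact convex domain on which the compact-set version of the fixed point theorem applies. A secondary, more routine point is confirming that upper semicontinuity survives the restriction to $C$, which follows once $T(C)\subseteq C$ and $C$ is compact.
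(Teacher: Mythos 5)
Your argument is correct, but note that the paper does not prove Theorem~\ref{BK} at all: it is quoted verbatim from Zeidler \cite[Corollary 9.8]{zeidler} as a known result (Bohnenblust--Karlin), so there is no in-paper proof to compare against. What you have written is essentially the standard derivation found in the cited source: pass to $C=\overline{\operatorname{co}}\,(T(M))$, which is compact by Mazur's theorem and contained in $M$ by closedness and convexity of $M$, check that $T$ restricts to an upper semi-continuous self-map of $C$ with nonempty closed convex (hence compact) values, and apply the compact-convex-domain multivalued fixed point theorem. The only point worth flagging is terminological: in an infinite-dimensional Banach space the theorem you invoke on $C$ is not Kakutani's original (finite-dimensional) theorem but its generalisation due to Fan and Glicksberg (Zeidler's Theorem~9.B); the paper itself uses the name ``Kakutani's fixed point theorem'' in the same loose sense in the discussion preceding~\eqref{T}, so your usage is consistent with the paper, but in a self-contained write-up you should cite the Kakutani--Fan--Glicksberg theorem explicitly. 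Your identification of Mazur's theorem as the crux --- converting relative compactness of the image into a compact convex invariant domain --- is exactly right.
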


Now we introduce the main result in this Section, which is an extension of Theorem~\ref{schauder2} to the case of discontinuous operators.

\begin{thm}\label{sch} Let $M$ be a nonempty, closed and convex subset of a Banach space $X$ and $T:M \longrightarrow M$ a mapping satisfying

\begin{enumerate}

\item[$(i)$] $T(M)$ is relatively compact in $X$;

\item[$(ii)$] $\{u\} \cap \T u \subset \{Tu\}$ for all $u \in M$, where $\T$ is as in (\ref{T}).

\end{enumerate}

Then $T$ has a fixed point in $M$.

\end{thm}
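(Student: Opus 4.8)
The plan is to reduce the non-compact problem to a compact one and then invoke Pouso's Theorem~\ref{pousoth}. The crucial new hypothesis is (i), that $T(M)$ is relatively compact, and this is exactly what lets me manufacture a compact convex domain on which $T$ already acts as a self-map. First I would set
$$K := \overline{\rm co}\,(T(M)),$$
the closed convex hull of the image. Since $T(M)$ is relatively compact, Mazur's theorem guarantees that $K$ is compact; it is nonempty and convex by construction. Because $T$ maps $M$ into itself and $M$ is closed and convex, we have $T(M)\subset M$ and hence $K=\overline{\rm co}\,(T(M))\subset M$. Consequently $T(K)\subset T(M)\subset K$, so $T$ restricts to a self-map $T|_K:K\longrightarrow K$ of the nonempty, compact, convex set $K$.

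The second step is to transfer hypothesis (ii) to the restriction. Let $\T$ be the multivalued map of~\eqref{T} built from $T$ with balls cut by $M$ (this is the operator appearing in (ii)), and let $\T_K$ denote the analogous construction with $M$ replaced by $K$. For every $u\in K$ and every $\varepsilon>0$ the inclusion $K\subset M$ gives $B_\varepsilon(u)\cap K\subset B_\varepsilon(u)\cap M$, whence $\overline{\rm co}\,(T(B_\varepsilon(u)\cap K))\subset \overline{\rm co}\,(T(B_\varepsilon(u)\cap M))$; intersecting over $\varepsilon>0$ yields the monotonicity inclusion $\T_K u\subset \T u$. Therefore, for each $u\in K$,
$$\{u\}\cap \T_K u\subset \{u\}\cap \T u\subset \{Tu\},$$
the last inclusion being precisely hypothesis (ii). Thus $T|_K:K\longrightarrow K$ satisfies all the assumptions of Theorem~\ref{pousoth}, which produces a fixed point $u_0\in K\subset M$ of $T|_K$; this $u_0$ is a fixed point of $T$, completing the argument.

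The only delicate point is the second step: the convexified operator in~\eqref{T} depends on the ambient set in which the balls are intersected, so one must be sure that shrinking the domain from $M$ to $K$ does not enlarge the multivalued values and thereby destroy the transversality condition (ii). The inclusion $\T_K u\subset \T u$ settles this, and it is the one place where the exact form of the definition must be used with care. I would also note an equally short alternative that bypasses Theorem~\ref{pousoth}: since $\T$ is upper semi-continuous with nonempty, compact, convex values and $\T(M)\subset K$ is relatively compact, the Bohnenblust--Karlin Theorem~\ref{BK} directly yields some $u_0$ with $u_0\in \T u_0$, and then hypothesis (ii) forces $u_0\in\{u_0\}\cap\T u_0\subset\{Tu_0\}$, i.e. $u_0=Tu_0$.
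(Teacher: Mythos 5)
Your proposal is correct, and your primary argument takes a genuinely different route from the paper's. The paper does not restrict the domain at all: it notes that $\T u \subset \overline{\rm co}\, T(B_{\varepsilon}(u)\cap M) \subset \overline{\rm co}\, T(M)$ for every $u\in M$ and $\varepsilon>0$, so that $\T(M)$ sits inside the compact set $\overline{\rm co}\, T(M)$ (Mazur), then applies the Bohnenblust--Karlin Theorem~\ref{BK} directly to the upper semi-continuous map $\T$ on $M$ and finishes with hypothesis $(ii)$ --- this is exactly the ``alternative'' you sketch in your last sentences. Your main argument instead passes to $K=\overline{\rm co}\,(T(M))\subset M$, checks that $T$ restricts to a self-map of the nonempty compact convex set $K$, and reduces to Pouso's Theorem~\ref{pousoth}; the one delicate point, the dependence of the convexification \eqref{T} on the ambient set, you handle correctly via the monotonicity $\T_K u\subset \T u$, which is the right direction for transferring $(ii)$. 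Each approach has a payoff: your reduction delegates all the multivalued machinery (upper semi-continuity, nonempty compact convex values, Kakutani) to the already-established compact case, at the modest cost of juggling two convexified operators; the paper's route keeps a single operator $\T$ on the original domain but must invoke the set-valued fixed point theorem and the regularity of $\T$ explicitly. Both are complete proofs of Theorem~\ref{sch}.
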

\begin{proof}
The multivalued operator $\T$ is upper semi-continuous with $\T u$ nonempty, convex and compact for each $u \in M$. Now we show that condition $(i)$ implies that $\T (M)$ is relatively compact on $X$. Indeed, for each $u \in M$ and all $\varepsilon >0$ we have that $\overline{\rm co} \, T(B_{\varepsilon}(u) \cap M) \subset \overline{\rm co} \, T(M)$, and therefore $\T (M)$ is a closed subset of the compact set $\overline{\rm co} \, T(M)$ (note that the closed convex hull of a compact set in a Banach space is also compact, see for example \cite[Theorem 5.35]{alibor}).

Since $\T (M)$ is relatively compact, we obtain by application of Theorem~\ref{BK} that $\T$ has a fixed point. Finally, the condition $(ii)$ implies that the obtained fixed point of $\T$ is a fixed point of~$T$.
\end{proof}

\section{Second-order BVPs with separated BCs}

In this Section we apply the previous abstract result on fixed points for discontinuous operators in order to look for $W^{2,1}$-solutions for the following singular second-order ODE with separated BCs:
\begin{equation}\label{problem}
\left\{
\begin{array}{ll}
u''(t)+g(t)f(t,u(t))=0, \ \text{for almost every (a.e.)}\ t \in I=[0,1], \\
\alpha u(0) - \beta u'(0)=0, \\
\gamma u(1) + \delta u'(1)=0,
\end{array}
\right.
\end{equation}
where $\alpha, \beta, \gamma, \delta \ge 0$ and $\Gamma=\gamma \beta + \alpha \gamma + \alpha \delta >0.$

This kind of second order BVPs have received a lot of attention in the literature. For example, in the monograph \cite{coha} the method of lower and upper solutions is used to look for $\mathcal{C}^2$-solutions in the case of continuous nonlinearities and $W^{2,1}$-solutions in the case of Carath\'eodory ones. This method is also applied in \cite{carepo} to a continuous $\varphi$-Laplacian problem with separated BCs. On the other hand, a monotone method is applied in \cite{cahe} in order to look for extremal solutions for a functional problem with derivative dependence in the nonlinearity. As a main novelty of the present work, we allow the nonlinearity $f$ to have a countable number of discontinuities with respect to its spatial variable and we require no monotonicity conditions. Moreover, the linear part can be singular.

To apply our new fixed point theorem to the BVP~\eqref{problem}, we recall that a function $u \in W^{2,1}(I)$ is a solution of~\eqref{problem} if (and only if) $u$ is a solution of the following Hammerstein integral equation:
\begin{equation}\label{ham}
u(t)=\int_0^1 k(t,s) g(s) f(s,u(s)) \, ds,
\end{equation}
whenever the integral in (\ref{ham}) has sense and where $k$ is the corresponding Green's function, which is given by (see for example \cite{lan})
\begin{equation}\label{k}
k(t,s)=\dfrac{1}{\Gamma} \, \left\{
\begin{array}{ll}
(\gamma + \delta - \gamma t)(\beta + \alpha s), \quad \mbox{if $0 \le s \le t \le 1$}, \\
(\beta + \alpha t)(\gamma + \delta - \gamma s), \quad \mbox{if $0 \le t < s \le 1$}.
\end{array}
\right.
\end{equation}
It is known~\cite{lan} that $k$ is non-negative. Furthermore note that $k$ is continuous (and therefore bounded) in the square $[0,1] \times [0,1]$ and that its partial derivatives $\dfrac{\partial k}{\partial t}$ and $\dfrac{\partial k}{\partial s}$ can be discontinuous in the diagonal $t=s$. However, these partial derivatives are essentially bounded on the square.

In the sequel we consider the Banach space $X=\mathcal{C}^1(I)$ of continuously differentiable functions defined on $I$, with the norm
$$
||u||=\sup_{t \in I} |u(t)| + \sup_{t \in I} |u'(t)|.
$$
\begin{lem}\label{lemma1} Assume that:
\begin{enumerate}
\item[$(H_1)$] $g \in L^1(I)$;
\item[$(H_2)$] there exist $R >0$ and $H_R \in L^{\infty}(I)$ such that for a.e. $t \in I$ and all $u \in [-R,R]$ we have $|f(t,u)| \le H_R(t)$;
\item[$(H_3)$] the following estimate holds,
$$
||H_R||_{L^{\infty}} \, (M_1+M_2) \le R,$$
where
\begin{equation}\label{M}
M_1=\sup_{t \in I} \int_0^1 k(t,s) |g(s)| \, ds, \quad M_2= \sup_{t \in I} \int_0^1 \left| \dfrac{\partial k}{\partial t}(t,s) \right| g(s)| \, ds;\end{equation}
\item[$(H_4)$] for each $u \in \overline{B}_R=\{u \in X \, : \, ||u|| \le R\}$ the composition $t \in I \longmapsto f(t,u(t))$ is a measurable function.

\end{enumerate}

Then the operator $T: \overline{B}_R \longrightarrow X$ given by
$$
Tu(t)=\int_0^1 k(t,s) g(s) f(s,u(s)) \, ds$$
is well-defined and maps $\overline{B}_R$ into itself.

\end{lem}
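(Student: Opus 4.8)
The plan is to verify two things separately: that for each fixed $u \in \overline{B}_R$ the function $Tu$ is a genuine element of $X=\mathcal{C}^1(I)$ (so that $T$ is well-defined), and that $\|Tu\|\le R$ (so that $T$ maps $\overline{B}_R$ into itself). I would begin with integrability of the integrand. If $u\in\overline{B}_R$ then $\sup_{t}|u(t)|\le\|u\|\le R$, so $u(s)\in[-R,R]$ for every $s\in I$; by $(H_4)$ the map $s\mapsto f(s,u(s))$ is measurable, and by $(H_2)$ it is dominated in modulus by $H_R$. Setting $\phi(s):=g(s)f(s,u(s))$, the bound $|\phi(s)|\le\|H_R\|_{L^\infty}|g(s)|$ together with $(H_1)$ shows $\phi\in L^1(I)$. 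Since $k$ is bounded on the square, the integral defining $Tu(t)$ converges for every $t$, and continuity of $k$ combined with dominated convergence (dominating function $\|k\|_\infty\|H_R\|_{L^\infty}|g|$) yields that $Tu$ is continuous.

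The crucial step is to show $Tu\in\mathcal{C}^1(I)$ and to identify its derivative; this is where one must be careful, because $\partial k/\partial t$ is discontinuous on the diagonal. I would split the integral at $s=t$ and use the explicit branches of $k$ in \eqref{k}, writing
$$Tu(t)=\frac{\gamma+\delta-\gamma t}{\Gamma}\int_0^t(\beta+\alpha s)\phi(s)\,ds+\frac{\beta+\alpha t}{\Gamma}\int_t^1(\gamma+\delta-\gamma s)\phi(s)\,ds.$$
Each integral is an absolutely continuous function of $t$ (the primitive of an $L^1$ function), so $Tu$ is a sum of products of smooth coefficients with absolutely continuous factors. Differentiating by the product rule and the fundamental theorem of calculus, the two boundary terms produced at the endpoint $s=t$ are $\tfrac{1}{\Gamma}(\gamma+\delta-\gamma t)(\beta+\alpha t)\phi(t)$ and $-\tfrac{1}{\Gamma}(\beta+\alpha t)(\gamma+\delta-\gamma t)\phi(t)$, which cancel exactly.

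This cancellation is the heart of the matter: it reflects the continuity of $k$ across the diagonal $t=s$, so that although $\partial k/\partial t$ jumps there, no singular contribution survives and differentiation under the integral is legitimate. What remains is
$$(Tu)'(t)=\frac{1}{\Gamma}\left[-\gamma\int_0^t(\beta+\alpha s)\phi(s)\,ds+\alpha\int_t^1(\gamma+\delta-\gamma s)\phi(s)\,ds\right]=\int_0^1\frac{\partial k}{\partial t}(t,s)\,\phi(s)\,ds,$$
which is continuous in $t$; hence $Tu\in X$ and $T$ is well-defined.

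Finally I would check the self-mapping property by direct estimates. Using $k\ge0$ and $|\phi(s)|\le\|H_R\|_{L^\infty}|g(s)|$ gives $|Tu(t)|\le\|H_R\|_{L^\infty}M_1$ for every $t$, while the formula for $(Tu)'$ together with the essential boundedness of $\partial k/\partial t$ gives $|(Tu)'(t)|\le\|H_R\|_{L^\infty}M_2$. Taking suprema, summing, and invoking $(H_3)$ yields $\|Tu\|\le\|H_R\|_{L^\infty}(M_1+M_2)\le R$, so $Tu\in\overline{B}_R$. The main obstacle is the differentiation step, and the observation that removes it is precisely the cancellation of the diagonal boundary terms forced by the continuity of the Green's function.
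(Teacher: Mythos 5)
Your proof is correct and follows essentially the same route as the paper: integrability of $s\mapsto g(s)f(s,u(s))$ from $(H_1)$, $(H_2)$, $(H_4)$, then the norm estimate via $M_1$, $M_2$ and $(H_3)$. The one difference is that you explicitly justify $(Tu)'(t)=\int_0^1\frac{\partial k}{\partial t}(t,s)g(s)f(s,u(s))\,ds$ by splitting at the diagonal and observing the cancellation of the boundary terms, a step the paper takes for granted; this is a welcome addition rather than a deviation.
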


\begin{proof}

Let $R>0$ given by condition $(H_2)$. First, note that the kernel $k$ has the form (\ref{k}), therefore for each $t \in [0,1]$ $k(t,\cdot)$ is a continuous function and for $s\neq t$ function $s \in [0,1] \to \dfrac{\partial k}{\partial t}(t,s)$ is well defined and integrable. Then, the conditions $(H_1)$, $(H_2)$ and $(H_4)$ imply that for $u \in \overline{B}_R$ the function $Tu$ is well defined.

On the other hand, if $u \in \overline{B}_R$ we have 
\begin{align*}
||Tu|| &\le \sup_{t \in I} \int_0^1 k(t,s) |g(s)| |f(s,u(s))| \, ds + \sup_{t \in I} \int_0^1 \left| \dfrac{\partial k}{\partial t}(t,s) \right| |g(s)| |f(s,u(s)| \, ds \\
& \le ||H_R||_{\infty} \, (M_1+M_2),
\end{align*}
and then condition $(H_3)$ implies that $||Tu|| \le R$.
\end{proof}
\begin{lem}\label{lemma2} Under the assumptions of Lemma~\ref{lemma1}, $T(\overline{B}_R)$ is relatively compact in $X$.
\end{lem}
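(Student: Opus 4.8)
The plan is to invoke the Arzel\`a--Ascoli theorem in the space $X=\mathcal{C}^1(I)$. Since the norm on $X$ controls both $\sup_{t}|u(t)|$ and $\sup_{t}|u'(t)|$, a subset of $X$ is relatively compact precisely when the family of functions $\{Tu : u \in \overline{B}_R\}$ and the family of derivatives $\{(Tu)' : u \in \overline{B}_R\}$ are each uniformly bounded and equicontinuous in $\mathcal{C}(I)$. Thus I would split the argument into three verifications: uniform boundedness of the values and their derivatives, equicontinuity of the values, and equicontinuity of the derivatives.

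Uniform boundedness is immediate, since Lemma~\ref{lemma1} already gives $\|Tu\| \le R$ for every $u \in \overline{B}_R$, whence $\sup_t|Tu(t)|$ and $\sup_t|(Tu)'(t)|$ are both bounded by $R$. For the equicontinuity of the values, I would estimate, for $t_1,t_2 \in I$, using $(H_2)$,
$$|Tu(t_1) - Tu(t_2)| \le \|H_R\|_{L^\infty}\int_0^1 |k(t_1,s) - k(t_2,s)|\,|g(s)|\,ds.$$
Since $k$ is continuous on the compact square $I \times I$, it is uniformly continuous there, so $|k(t_1,s)-k(t_2,s)|$ is small uniformly in $s$ once $|t_1-t_2|$ is small, and the integral is then controlled by $\|g\|_{L^1}$ from $(H_1)$. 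This bound is independent of $u$, giving equicontinuity of $\{Tu\}$.

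The delicate point is the equicontinuity of the derivatives, because $\partial k/\partial t$ is only essentially bounded and jumps across the diagonal $t=s$, so one cannot appeal to uniform continuity of the kernel. Here I would first write $Tu$ with the two branches of \eqref{k} displayed explicitly and then differentiate via the Leibniz rule; the boundary contributions produced by the moving endpoint $s=t$ cancel, leaving the clean expression
$$(Tu)'(t) = \frac{1}{\Gamma}\left[-\gamma\int_0^t(\beta+\alpha s)g(s)f(s,u(s))\,ds + \alpha\int_t^1(\gamma+\delta-\gamma s)g(s)f(s,u(s))\,ds\right],$$
which incidentally also confirms that $Tu \in \mathcal{C}^1(I)$, as implicitly used in Lemma~\ref{lemma1}. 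For $t_1 < t_2$ the difference $(Tu)'(t_2)-(Tu)'(t_1)$ reduces to integrals over the single interval $[t_1,t_2]$, so that, bounding the polynomial factors by a constant $C$ depending only on $\alpha,\beta,\gamma,\delta,\Gamma$,
$$|(Tu)'(t_2)-(Tu)'(t_1)| \le C\,\|H_R\|_{L^\infty}\int_{t_1}^{t_2}|g(s)|\,ds.$$

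Finally, since $g \in L^1(I)$ by $(H_1)$, the absolute continuity of the Lebesgue integral guarantees that $\int_{t_1}^{t_2}|g(s)|\,ds \to 0$ as $|t_2-t_1| \to 0$, uniformly and independently of $u$; this yields the equicontinuity of $\{(Tu)'\}$. The main obstacle is precisely this last family: the discontinuity of $\partial k/\partial t$ forces one to abandon the kernel-continuity argument used for the values and instead exploit the cancellation of the Leibniz boundary terms together with the absolute continuity of the integral of $g$. Once all four conditions are in place, Arzel\`a--Ascoli gives that $T(\overline{B}_R)$ is relatively compact in $X$.
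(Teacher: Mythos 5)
Your proof is correct and follows essentially the same route as the paper: Arzel\`a--Ascoli in $\mathcal{C}^1(I)$, with uniform boundedness from Lemma~\ref{lemma1} and equicontinuity of the derivatives obtained from the bound $\|H_R\|_{L^\infty}\int_{t_1}^{t_2}|g(s)|\,ds$ together with the absolute continuity of the Lebesgue integral. The only difference is presentational: the paper gets this last estimate in one line from $|(Tu)''(t)|\le |g(t)|H_R(t)$, whereas you derive it by explicitly differentiating the two branches of the kernel; your version also spells out the equicontinuity of the functions themselves and correctly identifies the containment $T(\overline{B}_R)\subset\overline{B}_R$ as giving uniform boundedness (the paper loosely calls it ``totally bounded'').
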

\begin{proof}
We have shown in Lemma~\ref{lemma1} that $T(\overline{B}_R) \subset \overline{B}_R$, therefore the set $T(\overline{B}_R)$ is totally bounded in $X$. Now, to see that $T(\overline{B}_R)$ is equicontinuous we only have to notice that for a.e. $t \in I$ and every $u \in \overline{B}_R$ it is
$$
|(Tu)''(t)| \le |g(t)| \, H_R(t),$$
which implies that
$$
|(Tu)'(t)-(Tu)'(s)| \le \int_t^s |(Tu)''(r)| \, dr \le \int_t^s |g(r)| H_R(r) \, dr.$$
Then $T(\overline{B}_R)$ is relatively compact in $X$.
\end{proof}
In a similar way as in Definition 4.1 of~\cite{pouso}, we introduce the admissible discontinuities for our nonlinearities.
\begin{defn} We say that $\gamma:[a,b] \subset I \longrightarrow \R$, $\gamma \in W^{2,1}([a,b])$, is an admissible discontinuity curve for the differential equation $u''(t)+g(t)f(t,u(t))=0$ if one of the following conditions holds:
\begin{enumerate}
\item[(i)] $-\gamma''(t)=g(t) f(t,\gamma(t))$ for a.e. $t \in [a,b]$;
\item[(ii)] there exist $\psi \in L^1([a,b],[0,+\infty))$ and $\varepsilon >0$ such that
\begin{equation}\label{inv1} \mbox{ either $-\gamma''(t) + \psi(t) <g(t)f(t,y)$ for a.e. $t \in [a,b]$ and all $y \in [\gamma(t)-\varepsilon,\gamma(t)+\varepsilon]$}, \end{equation}
\begin{equation}\label{inv2} \mbox{ or $-\gamma''(t) - \psi(t) > g(t)f(t,y)$ for a.e. $t \in [a,b]$ and all $y \in [\gamma(t)-\varepsilon,\gamma(t)+\varepsilon]$.} \end{equation}
\end{enumerate}
If (i) holds then we say that $\gamma$ is \emph{viable} for the differential equation; if (ii) holds we say that $\gamma$ is \emph{inviable}.
\end{defn}
The previous definition says, roughly speaking, that a time-dependent discontinuity curve $\gamma$ is admissible if one of the following holds: either $\gamma$ solves 
the differential equation on its domain or, if it does not, the solutions are pushed `far away' from $\gamma$. 

The following is the main result in this Section.

\begin{thm}\label{thmapp}
 Let $f$ and $g$ satisfy $(H_1)-(H_4)$ and the following:

\begin{enumerate}

\item[$(H_5)$] there exist admissible discontinuity curves $\gamma_n: I_n =[a_n,b_n] \longrightarrow \R$, $n \in \N$, such that for a.e. $t \in I$ the function $f(t,\cdot)$ is continuous in $[-R,R] \setminus \bigcup_{n \, : \, t \in I_n} \{ \gamma_n(t)\}.$

\end{enumerate}

Then problem (\ref{problem}) has at least one solution in $\overline{B}_R$.

\end{thm}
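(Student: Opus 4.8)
The plan is to apply Theorem~\ref{sch} on the set $M=\overline B_R$, which is nonempty, closed and convex. Lemmas~\ref{lemma1} and~\ref{lemma2} already furnish hypothesis $(i)$, so the entire difficulty lies in verifying $(ii)$: I must show that $u\in\T u$ forces $u=Tu$ for every $u\in\overline B_R$. Fix such a $u$. Invoking Proposition~\ref{char} (which holds on arbitrary subsets, cf.\ \cite[Proposition 3.2]{pouso}), for each $k\in\N$ I would select a finite convex combination
$$ w_k=\sum_{i}\lambda_i^{(k)}\,Tu_i^{(k)},\qquad u_i^{(k)}\in B_{1/k}(u)\cap\overline B_R,\quad \sum_i\lambda_i^{(k)}=1, $$
with $\|w_k-u\|<1/k$, so that $w_k\to u$ and $w_k'\to u'$ uniformly on $I$.

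Since $(Tv)''=-g\,f(\cdot,v(\cdot))$ a.e.\ for every $v\in\overline B_R$, I have $w_k''=-g\sum_i\lambda_i^{(k)}f(\cdot,u_i^{(k)})$, which by $(H_2)$ is dominated, uniformly in $k$, by $|g|\,H_R\in L^1(I)$; in particular $\{w_k''\}$ is uniformly integrable. As $w_k'\to u'$ uniformly, $w_k''\to u''$ in the sense of distributions, and the Dunford--Pettis theorem then gives $w_k''\rightharpoonup u''$ weakly in $L^1(I)$. Next I would localise: let $D\subset I$ be the full-measure set from $(H_5)$ and put $E=\bigcup_n\{t\in I_n:u(t)=\gamma_n(t)\}$. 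If $t\in D\setminus E$ then $f(t,\cdot)$ is continuous at $u(t)$, and since $\max_i|u_i^{(k)}(t)-u(t)|\le 1/k$, the convex combination converges and $w_k''(t)\to -g(t)f(t,u(t))=(Tu)''(t)$ pointwise.

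The remaining work is to handle $E=E_{\mathrm{vi}}\cup E_{\mathrm{inv}}$, split according to whether the relevant curve is viable or inviable. On $E_{\mathrm{vi}}$ the identity $u=\gamma_n$ forces $u''=\gamma_n''$ a.e., and viability gives $\gamma_n''=-g\,f(\cdot,\gamma_n)=-g\,f(\cdot,u)=(Tu)''$ a.e., so the desired equality holds there directly. The delicate point, and the main obstacle, is to prove $\meas(E_{\mathrm{inv}})=0$. For a single inviable curve $\gamma=\gamma_n$ satisfying \eqref{inv1}, set $A=\{t\in I_n:u(t)=\gamma(t)\}$ and suppose $\meas(A)>0$. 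On $A$ one has $u''=\gamma''$ a.e., while for $k$ large each $u_i^{(k)}(t)$ lies in $[\gamma(t)-\varepsilon,\gamma(t)+\varepsilon]$, so \eqref{inv1} yields $w_k''(t)<\gamma''(t)-\psi(t)$ a.e.\ on $A$. Testing the weak convergence $w_k''\rightharpoonup u''$ against $\mathbf 1_A\in L^\infty(I)$ gives
$$ \int_A\gamma''=\int_A u''=\lim_k\int_A w_k''\le\int_A(\gamma''-\psi), $$
whence $\int_A\psi\le 0$, which is incompatible with the strict gap in \eqref{inv1} on a set of positive measure; the case \eqref{inv2} is symmetric. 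Hence $\meas(E_{\mathrm{inv}})=0$. Combining the three regions, on $I\setminus E$ I have $w_k''\to(Tu)''$ a.e.\ and, by dominated convergence, in $L^1(I\setminus E)$, so uniqueness of the weak limit forces $u''=(Tu)''$ a.e.\ on $I\setminus E$; together with the identity on $E_{\mathrm{vi}}$ and $\meas(E_{\mathrm{inv}})=0$, this yields $u''=(Tu)''$ a.e.\ on $I$. Finally, every element of $\T u$ inherits the homogeneous separated boundary conditions, which define a closed subspace of $X$, so $u-Tu$ is an affine function satisfying those conditions and must vanish because $\Gamma>0$. Therefore $u=Tu$, hypothesis $(ii)$ of Theorem~\ref{sch} is satisfied, and that theorem delivers a fixed point, i.e.\ a solution of \eqref{problem} in $\overline B_R$.
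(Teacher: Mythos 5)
Your argument is correct in substance, but it takes a genuinely different route from the paper at the one point where all the difficulty is concentrated. The paper verifies hypothesis $(ii)$ of Theorem~\ref{sch} by splitting into the same three cases you identify and then, for the inviable and viable coincidence cases, simply imports the proof of \cite[Theorem 4.4]{pouso} with $M(t)=g(t)H_R(t)$; Pouso's treatment of the inviable case is a hands-on localisation argument (a density/covering lemma on the coincidence set $J$, plus fundamental-theorem-of-calculus estimates on small intervals) showing that every admissible convex combination $\sum\lambda_i Tu_i$ stays at a fixed positive $\mathcal{C}^1$-distance from $u$, i.e.\ $u\notin\T u$ directly. You instead run the contrapositive through a soft compactness argument: uniform integrability of $w_k''$ (dominated by $|g|H_R\in L^1$), Dunford--Pettis, identification of the weak limit with the distributional limit $u''$, and then testing against $\mathbf{1}_A$. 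This is shorter, unifies Cases 2 and 3 of the paper into a single computation, and has the pleasant side effect of establishing $u\in W^{2,1}(I)$ and the boundary conditions for $u$ along the way; what it costs is reliance on weak $L^1$ compactness where the paper's (i.e.\ Pouso's) argument is entirely elementary. Your closing step (that $u-Tu$ is affine, satisfies the separated BCs, and vanishes because $\Gamma>0$) is exactly the right way to pass from $u''=(Tu)''$ a.e.\ to $u=Tu$.

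Two points deserve explicit care. First, your contradiction terminates with $\int_A\psi\le 0$, which refutes $\meas(A)>0$ only if $\psi>0$ a.e.\ on $[a,b]$; the paper writes $\psi\in L^1([a,b],[0,+\infty))$, which literally permits $\psi\equiv 0$, but the intended reading (as in Pouso's Definition 4.1, which the paper explicitly mirrors) is $\psi>0$ a.e., and your proof should state that it uses this. Second, you invoke without proof the fact that $u=\gamma_n$ on a measurable set $A$ forces $u'=\gamma_n'$ and $u''=\gamma_n''$ a.e.\ on $A$; this is standard for $W^{2,1}$ functions (and is the content of the auxiliary lemma in \cite{pouso}), but it is doing real work in both the viable and inviable cases and should be cited, noting that its application requires $u\in W^{2,1}$, which your weak-limit step supplies.
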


\begin{proof}

We consider the multivalued operator $\T$ associated to $T$ as in (\ref{T}). Therefore $\T$ is upper semi-continuous with nonempty, convex and compact values and, as $T$, maps $\overline{B}_R$ into itself. Moreover, $\T(\overline{B}_R)$ is relatively compact in $X$ by Lemma~\ref{lemma2}. Therefore if we show that $\{u\} \cap \T u \subset \{Tu\}$ then we obtain, by Theorem~\ref{sch}, that $T$ has a fixed point in $\overline{B}_R$, which corresponds to a solution of the BVP~\eqref{problem}. This part of the proof now follows the line to the one of \cite[Theorem 4.4]{pouso}, but we include it for the sake of completeness and for highlighting the main differences between the two results. Thus we fix $u \in \overline{B}_R$ and consider three cases.\smallskip

\noindent \emph{Case 1}: $m(\{t \in I_n \, : \, u(t)=\gamma_n(t)\})=0$ for all $n \in \N$.\\ 
Then we have that $f(t,\cdot)$ is continuous for a.e. $t \in I$, and therefore if $u_k \to u$ in $\overline{B}_R$ we obtain $f(t,x_k(t)) \to f(t,x(t))$ for a.e. $t \in I$. This, joint with $(H_2)$ and $(H_3)$, imply that $Tu_k$ converges uniformly to $Tu$ in $X$. Then, $T$ is continuous at $u$ and therefore we obtain $\T u= \{Tu\}$.\smallskip 

\noindent \emph{Case 2}: there exists $n \in \N$ such that $\gamma_n$ is inviable and $m(\{t \in I_n \, : \, u(t)=\gamma_n(t)\})>0$. \\
Therefore, assume that $\gamma_n$ satisfies \eqref{inv2} (the other case is similar), let $\psi \in L^1(I)$ and $\varepsilon >0$ given by \eqref{inv2} and set
$$
J=\{t \in I_n \, : \, u(t)=\gamma_n(t)\}, \quad M(t)=g(t) \, H_R(t).$$

Then we repeat the proof done in \cite[Theorem 4.4]{pouso} by taking there $M(t)=g(t) \, H_R(t)$ and it follows that $u \notin \T u$.\smallskip

\noindent \emph{Case 3}: $m(\{t \in I_n \, : \, u(t)=\gamma_n(t)\})>0$ only for some of those $n \in \N$ such that $\gamma_n$ is viable. \\
Again, it suffices to follow the referred proof by replacing $f(t,x(t))$ by $g(t)f(t,u(t))$ to obtain that, in this case, $u \in \T u$ implies $u=Tu$. 

Then, we have that $\{u\} \cap \T u \subset \{Tu\}$ for all $u \in \overline{B}_R.$ By application of Theorem~\ref{sch} we obtain that $T$ has at least one fixed point in $\overline{B}_R$, which corresponds to a solution of  the BVP~\eqref{problem} in $\overline{B}_R.$
\end{proof}
\begin{rem}
Note that if $f(t,0)=0$, then $0$ is a solution of the BVP~\eqref{problem}, therefore when $f(t,0)\neq 0$ Theorem~\ref{thmapp} provides the existence of a nontrivial solution. In addition, if $g(t)f(t,u)\geq 0$, then we obtain the existence of a \emph{non-negative} solution with a non-trivial norm.
\end{rem}
\begin{rem}
The improvement with the respect to Theorem 4.4 of~\cite{pouso} relies not only on the fact that we can deal with a more general set of BCs but also on the fact that we do not require \emph{global} $L^1$ estimates on $f$, allowing a more general class of nonlinearities.
\end{rem}
Finally, we illustrate our results in an example.
\begin{exa} 
For $n \in \N$ we denote by $\phi(n)$ the function such that $\phi(1)=2$ and for $n \ge 2$ $\phi(n)$ counts the number of divisors of $n$. Thus defined, $\phi(n) \ge 2$ for all $n \in \N$, $\phi$ is not bounded and, as there are infinite prime numbers, $\liminf_{n \to \infty} \phi(n)=2$. Now we define the function
\begin{equation}\label{f}
(t,u) \in (0,1] \times \R \longmapsto \tilde{f}(t,u)=\phi^{\lambda}(n(t,u)), \ \lambda \in (0,1),
\end{equation}
where
$$
n(t,u):=
\left\{
\begin{array}{ll}
1, \ & \text{if}\ u \in (-\infty,-t), \\
\\
n, & \text{if}\ -\dfrac{t}{n} \le u < -\dfrac{t}{n+1}  \ \text{and}\ -t \le u <0, \\
\\
n, & \text{if}\ (n-1) \sqrt{t} \le u < n \sqrt{t} \ \text{and}\ u \ge 0.
\end{array}
\right.
$$

We are concerned with the following ODE
\begin{equation}\label{de}
u''(t)=\dfrac{\phi^{\lambda}(n(t,u))}{\sqrt{t}}, \mbox{ for a.e. $t \in I=[0,1]$},\end{equation}
coupled with separated BCs.

We claim that this problem has at least one solution. In order to show this, note that we can rewrite the ODE~ \eqref{de} in the form $u''(t)+g(t)f(t,u(t))=0$, where $g(t)=\dfrac{1}{\sqrt{t}}$ and $f=-\tilde{f}$, $\tilde{f}$ as in (\ref{f}). We now show that the functions $g$ and $f$ satisfy conditions $(H_1)-(H_5)$.

First, it is clear that $g \in L^1(I)$ and so $(H_1)$ holds. On the other hand, as $\phi(n) \le \max\{2,n\}$ for all $n \in \N$, we obtain that for each $n \in \N$ we have $u \in [-n,n] \Rightarrow |f(t,u)| \le \max\{2,n\}^{\lambda}$. Then, if we take $R \in \N$, $R \ge 2$, large enough such that $M_1+M_2 \le R^{1-\lambda}$ ($M_1,M_2$ as in (\ref{M})) we can guarantee that $(H_2)$ and $(H_3)$ hold.

To check $(H_4)$, note that for every continuous function $u$ we can write the composition $t \in I \longmapsto f(t,u(t))$ as
\begin{equation}\label{comp}
t \longmapsto f(t,u(t))=-\sum_{n=1}^{\infty} \phi^{\lambda}(n) \left(\chi_{I_n}(t)+\chi_{J_n}(t)\right)+\phi(1) \chi_{K}(t),\end{equation}
where $\chi$ denotes the characteristic function and $I_n$, $J_n$, $K$ are the following measurable sets:
$$
\left\{
\begin{array}{l}
I_n=u^{-1}([(n-1) \sqrt{t},n\sqrt{t}) \cap [0,+\infty)), \ n \in \N,\\
\\
J_n=u^{-1}\Bigl(\Bigl[\dfrac{-1}{n}t,\dfrac{-1}{n+1} t\Bigr) \cap [-t,0)\Bigr), \ n \in \N, \\
\\
K=u^{-1}((-\infty,-t)).
\end{array}
\right.$$
Then (\ref{comp}) is a measurable function and therefore condition $(H_4)$ is satisfied.

Finally we check the condition $(H_5)$. For a.a. $t \in I$ function $f(t,\cdot)$ has a countable number of discontinuities of the form $\gamma_k(t)=k\sqrt{t}$, and $\hat{\gamma}_k(t)=\dfrac{-1}{k+1}t$, $k \in K \subset \N$, but all these discontinuity curves are inviable for the differential equation. Indeed, notice that for $k \in K$ and $t \in I$ we have $-\gamma_k''(t)=\dfrac{k}{4t^{3/2}}>0$, $-\hat{\gamma}_k''(t)=0$ and
$$
g(t)f(t,\gamma_n(t)) \le -\dfrac{2^{\lambda}}{\sqrt{t}} \le -2^{\lambda},
$$
taking into account that $\phi(n) \ge 2$ for all $n \in \N$. Then, condition $(H_5)$ holds.

We can conclude that the differential equation (\ref{de}) coupled with separated BCs has at least one solution in $\overline{B}_R$ provided that $M_1+M_2 \le R^{1-\lambda}$. Note that the solution is non-trivial since the zero function does not satisfy the ODE.

In the special case of $\alpha=\beta=\gamma=\delta=1, \lambda=1/3$ we obtain (rounded to the third decimal place) $M_1+M_2=2,336$ and $R=4$.

\end{exa}

\section*{Acknowledgements}
The authors wish to thank Professor Rodrigo L\'opez Pouso for constructive comments.
R. Figueroa was partially supported by Xunta de Galicia, Conseller\'ia de Cultura, Educaci\'on e Ordenaci\'on Universitaria, through the project EM2014/032 ``Ecuaci\'ons diferenciais non lineares"; and by by Ministerio de Econom\'ia y Competitividad of Spain under Grant MTM2010-15314, cofinanced by the European Community fund FEDER. G.~Infante was partially supported by G.N.A.M.P.A. - INdAM (Italy). 
This paper was partially written during a visit of R. Figueroa to the 
Dipartimento di Matematica e Informatica, Universit\`{a} della
Calabria. R. Figueroa wants to thank all the people of this Dipartimento for their kind and warm hospitality.

\end{document}